\newtheorem{theorem}{Theorem}[section]
\newtheorem{lemma}[theorem]{Lemma}
\theoremstyle{definition}
\newtheorem{definition}[theorem]{Definition}
\newtheorem{example}[theorem]{Example}
\theoremstyle{remark}
\newtheorem{remark}[theorem]{Remark}
\numberwithin{equation}{section}
\DeclareMathOperator{\ad}{ad}
\DeclareMathOperator{\chr}{char}
\DeclareMathOperator{\Der}{Der}
\DeclareMathOperator{\End}{End}
\DeclareMathOperator{\Id}{Id}
\begin{document}

\title{HNN-extension of Lie superalgebras}

\author{M.~Ladra \orcidlink{0000-0002-0543-4508}}
\address{[M.~Ladra] Departamento de Matem\'aticas, Instituto de Matem\'aticas, Universidade de Santiago de Compostela,
Santiago de Compostela, Spain}
\email{manuel.ladra@usc.es}

\author{P.~P\'aez-Guill\'an \orcidlink{0000-0003-2761-7505}}
\address{[P.~P\'aez-Guill\'an] Departamento de Matem\'aticas, Instituto de Matem\'aticas, Universidade de Santiago de Compostela,
Santiago de Compostela, Spain}
\email{pilar.paez@usc.es}

\author{C.~Zargeh \orcidlink{0000-0002-1763-1568}}
\address{[C.~Zargeh] Departamento de Matem\'aticas, Instituto de Matem\'aticas, Universidade de Santiago de Compostela,
Santiago de Compostela, Spain}
\email{chia.zargeh@usc.es}

\date{2018}

\begin{abstract}
We construct HNN-extensions of Lie superalgebras and prove that every  Lie superalgebra embeds into
any of its HNN-extensions. Then as an application we show that any Lie superalgebra with at
most countable dimension embeds into a two-generator Lie superalgebra.
\end{abstract}

\subjclass[2010]{17A36, 17B01, 17B35, 16S15}
\keywords{Lie superalgebra, HNN-extension, derivation, Gr\"obner-Shirshov basis}

\maketitle

\section*{Introduction}
The Higman-Neumann-Neumann extensions (HNN-extensions) for groups  were introduced in \cite{H1} in the context of the study of embeddability of groups. The idea of the construction of HNN-extensions has been extended in several
ways to other algebraic structures such as Lie algebras \cite{L3,W1}, semigroups \cite{H2},  associative rings \cite{D1,L3}  and Leibniz algebras \cite{L1}.
The HNN-extensions are a strong tool in combinatorial group theory to introduce embedding theorems, as they allow to prove
that every countable group can be embedded into a group with two generators. There exist embedding theorems for the case of Lie algebras and Leibniz algebras
 analogous to the ones in group theory stating that every Lie (Leibniz) algebra with at most countable dimension can be embedded into a Lie (Leibniz) algebra with two generators.
   As the main difference between the construction of HNN-extensions for groups and algebras, we note that HNN-extensions of groups are defined by subgroups and isomorphisms,
    whereas HNN-extensions of algebras come from subalgebras and derivations.
     In addition to that, the approach to the construction of HNN-extensions of Lie (Leibniz) algebras is different
     and it is based on the theory of Gr\"obner-Shirshov bases and Composition-Diamond lemma (see \cite{B1}).

In this paper, we use the Gr\"obner-Shirshov bases and the Composition-Diamond lemma for Lie superalgebras, introduced in \cite{B2}, to construct the HNN-extensions of Lie superalgebras, and to prove that every Lie superalgebra can be embedded into its HNN-extension. The embeddability of countable dimensional Lie superalgebras into two generated Lie superalgebras was proved by Mikhalev in \cite{M1}.

The paper is organized as follows. In Section~\ref{S:prel} we
recall some necessary notions and facts on Lie superalgebras.
 In Section~\ref{S:GS},  following the paper on Lie superalgebras \cite{B2},  we remember  the construction of Gr\"obner–Shirshov bases for Lie superalgebras and their properties.
In  Section~\ref{S:HNN},  we  construct the HNN-extension of Lie superalgebras and prove that every  Lie superalgebra embeds into
any of its HNN-extensions.
Finally, in  Section~\ref{S:app}  we give a different proof of the embeddability  theorem of countable dimensional Lie superalgebras into two generated Lie superalgebras, using the HNN-extension.

\section{Preliminaries}\label{S:prel}
In this section we recall notions and properties of Lie superalgebras.

If $V$ is a $\mathbb{Z}_2$-graded vector space over a field $\mathbb{K}$, then $V=V_{\bar{0}} \oplus V_{\bar{1}}$, where the even and the odd subspaces of $V$ are denoted by $V_{\bar{0}}$ and   $V_{\bar{1}}$, respectively. The non-zero elements of $V_{\bar{0}} \cup V_{\bar{1}}$ will be called \emph{homogeneous}. Let us consider the gradation $|\ | \colon V \to \mathbb{Z}_2=\{\bar{0},\bar{1}\}$ for the homogenous elements of $V$ defined by
\[
  |x|=\begin{cases}
    \bar{0}, & \text{if} \ x \in V_{\bar{0}},\\
    \bar{1}, & \text{if} \  x \in V_{\bar{1}}.
  \end{cases}
\]
\begin{definition}
A \emph{Lie superalgebra} $L$ is a $\mathbb{Z}_2$-graded vector space $L=L_{\bar{0}}\oplus L_{\bar{1}}$ equipped with a bilinear superbracket structure $[~,~] \colon L \otimes L \to L$ preserving the degree (being $|a\otimes b|=|a|+|b|$) and satisfying the requirements of graded anti-symmetry and super Jacobi identity as follows:
\[[x,y]=-(-1)^{{|x|}{|y|}} [y,x], \]
\[[x,[y,z]]=[[x,y],z]+ (-1)^{{|x|}{|y|}} [y,[x,z]] ,\]
\[ [x_{\bar{0}}, x_{\bar{0}} ]=0,\]
\[[x_{\bar{1}},[x_{\bar{1}}, x_{\bar{1}} ]]=0,\]
for all homogeneous elements $x,y,z \in L$, $x_{\bar{0}} \in L_{\bar{0}}$ and $x_{\bar{1}} \in L_{\bar{1}}$. Note that the two last identities are trivial consequences of the first ones in case that $\chr(\mathbb{K})\neq 2,3$.
\end{definition}
 Let $\{X_i\}_{i\in\Lambda} \subset L_{\bar{0}}\cup L_{\bar{1}} $ be a basis of $L$, with structure constants given by
\[ [X_i,X_j]=\sum_{l\in\Lambda} \alpha_{ij}^{l} X_l .\]
Denoting $|X_i|$ by $|i|$, it is clear that $\alpha_{ij}^{k}=0$ whenever $|i|+|j| \neq |l|$. Structure constants satisfy the super Jacobi identity:
\begin{equation}\label{jacobi}
\sum_{l\in\Lambda}(\alpha_{jk}^l\alpha_{il}^{m}-\alpha_{ij}^l\alpha_{lk}^{m}-(-1)^{|i||j|}\alpha_{ik}^l\alpha_{jl}^{m})=0,
\end{equation}
where
\[ \alpha_{ij}^{l}= - (-1) ^{|i| |j|} \alpha_{ji}^{l}.\]
Also, they satisfy
\[\alpha_{ii}^l=0,\]
whenever $|i|=\bar{0}$, and
\[\sum_{l\in\Lambda}\alpha_{ii}^k\alpha_{il}^m=0,\]
if $|i|=\bar{1}$.
\begin{definition}
A derivation $d$ of degree $|d|$, $|d| \in \mathbb{Z}_2$, is defined as a linear map $d \colon L \to L$ such that
\[ d([a,b])=[d(a),b]+(-1)^{|d| |a|} [a,d(b)] .\]
We denote the space of derivations of degree $i$ as $\Der_i(L)$, and define the space of derivations as \[\Der(L)=\Der_{\bar{0}}(L)\oplus\Der_{\bar{1}}(L).\]
It holds that $\Der(L)$ is a graded Lie subalgebra of $\End(L)$.
\end{definition}
\begin{example}
Let $L$ be a Lie superalgebra, and $a\in L$. It follows from Jacobi identity that $\ad_a \colon L \to L, b \mapsto [a,b]$, is a derivation of $L$ of degree $|a|$.
\end{example}
\section{Gr\"obner-Shirshov bases theory for Lie superalgebras} \label{S:GS}
In this section we recall the theory of Gr\"obner-Shirshov bases and Composition-Diamond lemma for the case of Lie superalgebras (see \cite{B2}). In fact, Composition-Diamond lemma is a powerful theorem in algorithmic
and combinatorial algebra which provides linear bases and normal forms
of the elements of an algebra presented by generators and defining relations.

 Let us consider a free Lie superalgebra and an graded ideal $\Id(S)$ generated by a set $S$. If $S$ is a Gr\"obner-Shirshov basis, then the leading term $\bar{f}$ of any polynomial $f$ in $\Id(S)$ contains $\bar{s}$ as a subword, for some $s \in S$ (see below Lemma~\ref{L:GS}). In the sequel, we re-express this idea with more details. For this purpose, we need an ordered set of generators as well as a monomial ordering. It is worth pointing out that the property of being a Gr\"obner-Shirshov basis is always relative to a specific monomial ordering.

Let $T=T_{\bar{0}} \cup T_{\bar{1}}$ be a $\mathbb{Z}_2$-graded set with a linear ordering $\prec$, and let $T^{\ast}$ (resp., $T^{\#}$) be the semigroup of associative words on $T$ (resp., the groupoid of nonassociative words on $T$). There exist induced $\mathbb{Z}_2$-gradings both on the semigroup $T^{\ast}$ and on the groupoid $T^{\#}$: $T^{\ast} = T_{\bar{0}}^{\ast} \cup T_{\bar{1}}^{\ast}$ and $T^{\#}=T_{\bar{0}}^{\#} \cup T_{\bar{1}}^{\#}$, respectively.
Indices $\bar{0}$ and $\bar{1}$ indicate the \emph{even}  and  \emph{odd} elements, respectively. The \emph{length} of a word $u$ is denoted by $l(u)$ and the empty word is denoted by $1$.

\begin{remark}
For an associative word $u \in T^{\ast}$ there exist a certain arrangement of brackets denoted by $(u)$ and a canonical bracket-removing homomorphism $\rho \colon T^{\#} \to T^{\ast}$ given by $\rho((u))=u$ for $u \in T^{\ast}$.
\end{remark}
Let us define the \emph{lexicographical} and \emph{length-lexicographical} orderings, which will be denoted by $<$ and $\ll$ on $T^{\ast}$, respectively, as follows:
\begin{itemize}
    \item [(i)] $u < 1$ for any nonempty word $u$; and inductively, $u < v$ whenever $u=x_iu^{\prime}$, $v=x_jv^{\prime}$, and $x_i \prec x_j$ or $x_i=x_j$ and $u^{\prime} < v^{\prime}$.
    \item[(ii)] $u \ll v$ if $l(u) < l(v)$ or $l(u)=l(v)$ and $u < v$.
\end{itemize}
\begin{remark}
The orderings $<$ and $\ll$ are defined on $T^{\#}$ by
\begin{itemize}
    \item $u < v$ if and only if $\rho(u) < \rho(v)$.
    \item $u \ll v$ if and only if $\rho(u) \ll \rho(v)$.
\end{itemize}
\end{remark}

 \begin{definition}
A nonempty word $u$ is called a \emph{Lyndon-Shirshov word} if $u \in T$ or for $vw > wv$ for any decomposition of $u=vw$ with $v,w \in T^{\ast}$, i.e. $u$ is greater than any of its cyclic permutations. A nonempty word $u$ is called a \emph{super-Lyndon-Shirshov word} if either it is a Lyndon-Shirshov word or it has the form $u=vv$ with $v$ a Lyndon-Shirshov word in $T_{\bar{1}}^{\ast}$.
\end{definition}

\begin{definition}
A nonempty nonassociative word $u$ is called a \emph{Lyndon-Shirshov monomial} if  either $u$ is an element of $T$ or
\begin{itemize}
    \item [(i)] if $u=u_1u_2$, then $u_1 , u_2$ are Lyndon-Shirshov monomials with $u_1> u_2$,
    \item[(ii)] if $u=(v_1v_2)w$, then $v_2 \leq w$.
\end{itemize}
A nonempty nonassociative word $u$ is called a \emph{super-Lyndon-Shirshov monomial } if either it is a Lyndon-Shirshov monomial or it has the form $u=vv$ with $v$ a Lyndon-Shirshov monomial in $T_{\bar{1}}^{\#}$.
\end{definition}
If we denote by $A\langle T \rangle$ the free associative algebra on $T$ over a field $\mathbb{K}$ with $\chr(\mathbb{K})\neq 2,3$, then we can define a the Lie superalgebra structure on $A\langle T \rangle$ with the superbracket
\[ [x,y]= xy - (-1)^{|x| |y|} yx ,\]
for any $x,y \in A\langle T \rangle$.
Denote by $L \langle T \rangle $  the free Lie superalgebra generated by $T$, which is a graded subalgebra of $A\langle T \rangle$ with the above superbracket. As a direct consequence, expanding superbracket gives us associative words belonging to $ A\langle T \rangle$.

There is a one to one correspondence between the set of super-Lyndon-Shirshov words and the set of super-Lyndon-Shirshov monomials. In the sequel, based on Shirshov's special bracketing \cite{S1}, the bracketing of super-Lyndon-Shirshov words is described. It has a key role in the definition of compositions between polynomials in free Lie superalgebras.

\begin{lemma}[\cite{B2}]
Let $u$ and $v$ be super-Lyndon-Shirshov words such that $v$ is contained in $u$ as a subword. Write $u=avb$ with $a,b\in T^{\ast}$. Then there is an arrangement of brackets $[u]=(a[v]b)$
on $u$ such that $[v]$ is a super-Lyndon-Shirshov monomial, $\overline{[u]}=u$ and the leading coefficient of $[u]$ is either $1$ or $2$.
\end{lemma}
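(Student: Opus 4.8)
The plan is to argue by induction on the length $l(u)$, using the unique standard (Shirshov) factorization of Lyndon--Shirshov words together with the one-to-one correspondence between super-Lyndon--Shirshov words and super-Lyndon--Shirshov monomials recalled above. At the outset one may separate the genuinely super case of $u$: if $u = ss$ with $s$ an odd Lyndon--Shirshov word, then $[u] = [s][s]$, and locating $[v]$ within this product reduces either to the Lyndon case for $s$ or to a straddling configuration of the kind handled below. Thus it suffices to treat the case in which $u$ is an ordinary Lyndon--Shirshov word.

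In the base case $l(u) = l(v)$ we have $a = b = 1$ and $u = v$, so we set $[u] = [v]$, the super-Lyndon--Shirshov monomial attached to $v$ by the correspondence. Then $\overline{[v]} = v$, and its leading coefficient is $1$ if $v$ is a Lyndon--Shirshov word and $2$ if $v = ww$ with $w$ odd, because in $A\langle T\rangle$ one computes $[w,w] = ww - (-1)^{|w||w|}ww = 2ww$. This is precisely the origin of the coefficient $2$ in the statement.

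For the inductive step write the standard factorization $u = u_1 u_2$, where $u_2$ is the longest proper Lyndon--Shirshov suffix and $u_1, u_2$ are Lyndon--Shirshov with $u_1 > u_2$, so that $[u] = [[u_1][u_2]]$. The subword $v$ either lies entirely in $u_1$, lies entirely in $u_2$, or straddles the cut. In the first two cases I would apply the induction hypothesis to $u_1$ or $u_2$, isolating $[v]$ inside a bracketing of that half whose leading word is the half itself, then re-bracket against the standard bracketing of the complementary half. Invoking the standard fact that $\overline{[[p][q]]} = pq$ whenever $\overline{[p]} = p$, $\overline{[q]} = q$ and $pq$ is a Lyndon--Shirshov word then yields $\overline{[u]} = u$ with the leading coefficient preserved.

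The main obstacle is the straddling case, where $v = v'v''$ with $v'$ a nonempty proper suffix of $u_1$ and $v''$ a nonempty proper prefix of $u_2$; here neither half contains $v$ and the induction hypothesis does not apply directly. One must instead exploit the combinatorics of the standard factorization---in particular the maximality of $u_2$ as a Lyndon--Shirshov suffix---to re-associate the brackets, pushing $[v]$ outward by repeated use of the super-Jacobi and graded anti-symmetry identities until the remaining instances are shorter and covered by induction. The delicate point throughout is to keep the leading coefficient in $\{1,2\}$: a re-association may introduce the factor $2$ coming from an odd square, but the super-Lyndon--Shirshov structure prevents two such doublings from compounding, so no coefficient exceeding $2$ ever appears. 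Carrying this bound through the straddling re-associations is where the real work of the proof lies.
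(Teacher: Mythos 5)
Your proposal is not a proof: it isolates the hard case and then leaves it open. Note first that the paper itself gives no argument for this lemma --- it is quoted directly from \cite{B2} --- so an attempted proof must stand entirely on its own. Your induction on $l(u)$ via the standard factorization $u=u_1u_2$ only works when $v$ lies inside $u_1$ or inside $u_2$; when $v$ straddles the cut you say one must ``push $[v]$ outward by repeated use of the super-Jacobi and graded anti-symmetry identities'' and that this ``is where the real work of the proof lies.'' That admission is precisely the content of the lemma, and it is missing. The same deferral occurs in your treatment of the odd-square case $u=ss$, which you reduce to ``a straddling configuration of the kind handled below'' that is never handled.

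Moreover, the mechanism you propose for the straddling case cannot succeed as described. The lemma demands a single arrangement of brackets $[u]=(a[v]b)$ on the word $u$, i.e.\ a bracketed monomial (an element of $T^{\#}$, up to the stated leading coefficient) in which $[v]$ occurs literally as a subfactor. Re-association by the super-Jacobi identity does not carry one bracketing to another: it replaces a bracketed monomial by a \emph{linear combination} of bracketed monomials, so after such manipulations you no longer have a bracketing of $u$ at all, only an element of $L\langle T\rangle$. Likewise your claim that ``two doublings cannot compound,'' keeping the leading coefficient in $\{1,2\}$, is asserted rather than proved. The actual argument (Shirshov's special bracketing \cite{S1}, extended to the super case in \cite{B2}) avoids the straddling problem altogether by never invoking the standard factorization of $u$: one builds the bracketing around the occurrence of $v$ directly, factoring the right segment $b$ as a nonincreasing product of Lyndon-Shirshov words $b=c_1c_2\cdots c_k$, forming $[\ldots[[v][c_1]]\ldots[c_k]]$, showing via the ordering lemmas for Lyndon-Shirshov words that its leading word is $vb$, and then completing the bracketing over $a$; the coefficient $2$ enters only when $[v]$ is an odd square $[[w][w]]$, since $[[w],[w]]=2ww+\text{lower terms}$ in $A\langle T\rangle$ (the one part of your sketch that is correct). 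If you wish to repair the write-up, replace the straddling analysis by this direct construction.
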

Given a super-Lyndon-Shirshov word $u=avb$, where $v$ is another super-Lyndon-Shirshov word and $a,b\in T^{\ast}$, the \emph{bracketing relative to}  $v$, $[u]_v$,  is defined as follows:
\begin{itemize}
    \item [(i)] $[u]_v=(a[v]b)$ if the leading coefficient of $[u]$ is $1$,
    \item[(ii)] $[u]_v=\dfrac{1}{2}(a[v]b)$ if the leading coefficient of $[u]$ is $2$.
\end{itemize}
We note that $[u]_v$ is monic and $\overline{[u]_v}=u$. Let also $p$ be a monic polynomial in $L\langle T\rangle$ such that $\bar{p}$ is a super-Lyndon-Shirshov word. The \emph{bracketing on u relative to p} $[u]_p$ is defined to be the result of the substitution of $p$ instead of $\bar{p}$ in $[u]_{\bar{p}}$. Again, $[u]_p$ is monic and $\overline{[u]_p}=u$.

\subsection*{Composition of polynomials}
\begin{enumerate}
    \item \textbf{Associative compositions}. Let $f,g$ be monic elements in the free associative algebra $A\langle T \rangle$ with leading terms $\bar{f}$ and $\bar{g}$.

   If we have $w=\bar{f}a=b\bar{g}$ with $l(\bar{f}) > l(b)$ and $a,b \in T^{\ast}$ then the \emph{intersection composition} is
    \[ (f,g)_w=fa - bg.\]
     If $w=\bar{f}=a \bar{g} b $, then the \emph{inclusion composition} is defined as
    \[(f,g)_w=f-agb .\] Note that $\overline{(f,g)_w} \ll w$.
    \item \textbf{Lie compositions}. Let $f,g$ be monic polynomials in the free Lie superalgebra $L\langle T \rangle$ with leading term $\bar{f}$ and $\bar{g}$.

    If there exist $a,b \in T^{\ast}$ such that $\bar{f}a=b\bar{g}=w$ with $l(\bar{f}) > l(b)$, then the \emph{intersection composition} is defined as
    \[ \langle f,g \rangle_{w} = [w]_f - [w]_g .\]

    If there exist $a,b \in T^{\ast}$ such that $\bar{f}=a\bar{g}b=w$, then the \emph{inclusion composition} is defined as
    \[ \langle f,g \rangle_w=f-[w]_g .\] We note that $ \overline{\langle f,g \rangle} \ll w$.
\end{enumerate}

Associative and Lie compositions are closely related, as it is shown in~\cite[Lemma 2.7 and Theorem 2.8]{B2}. In fact, there is an equivalent relationship between these two types of compositions. Henceforth, we will refer just to Lie compositions.

In order to define Gr\"obner-Shirshov bases, it is necessary to provide the triviality criteria of compositions. Sometimes, in the literature, the concept of triviality of compositions has been referred to as \emph{closed under compositions} or \emph{complete under compositions}.
\subsection*{Triviality criteria}
Let $S$ be a set of monic polynomials in $L\langle T \rangle \subset A\langle T \rangle$ and $\Id(S)$ be the graded ideal generated by $S$ in the free Lie superalgebra $L\langle T \rangle$.

If for any $f,g\in S$ such that the Lie composition $\langle f,g\rangle_w$ is defined with respect to a certain $w\in T^*$, we have \[\langle f,g\rangle_w= \sum \alpha_i a_i s_i b_i ,\]
where $\alpha_i \in \mathbb{K}$, $a_i , b_i \in T^{\ast}$ and $s_i \in S$, with $a_is_ib_i \ll w$, we say that $S$ is \emph{trivial under Lie composition}.

\begin{definition}
A set of monic polynomials $S$ is called a Gr\"obner-Shirshov basis for the graded ideal $\Id(S)$, if it is trivial under Lie composition.
\end{definition}

In~\cite{B2} it is explained how to construct a Gr\"obner-Shirshov basis for any graded ideal $\Id(S)$ of $L\langle T\rangle$.

The next lemma is based on Shirshov's lemma. According to this lemma, the leading term of every monic polynomial in the graded ideal generated by $S$ can be reduced with respect to an element in $S$. The main consequence of the lemma is that we are able to determine a linear basis for the factor algebra $L\langle T \rangle /\Id(S)$ by taking the set of irreducible monomials.
\begin{lemma}[\cite{S1}] \label{L:GS}
If $S$ is a Gr\"obner-Shirshov basis for the graded ideal $\Id(S)$, then for any $f \in \Id(S)$, we can express the word $\bar{f}$ as $\bar{f}=a\bar{s}b$, where $s \in S$ and $a,b \in T^{\ast}$.
\end{lemma}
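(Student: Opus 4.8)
The plan is to run Shirshov's reduction argument: given an arbitrary element of $\Id(S)$, attach to it a finite sum of elementary products $a_i s_i b_i$ and use triviality under composition to drive the maximal leading word of such a presentation down until it coincides with $\bar f$, at which point $\bar f$ is visibly divisible by some $\bar s$. Concretely, I would fix a nonzero $f\in\Id(S)$ and, using that $\Id(S)$ is generated by $S$ together with the equivalence between Lie and associative compositions recorded in \cite[Lemma~2.7 and Theorem~2.8]{B2}, write $f=\sum_i \alpha_i\,a_i s_i b_i$ with $\alpha_i\in\mathbb{K}$, $a_i,b_i\in T^{\ast}$ and $s_i\in S$, where each $a_i s_i b_i$ is monic with leading word $w_i=a_i\bar{s_i}b_i$. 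Put $w=\max_i w_i$ with respect to $\ll$. Since $\ll$ is well-founded, I may choose, among all presentations of $f$ of this form, one for which the pair $(w,N)$ is lexicographically minimal, where $N$ is the number of indices $i$ with $w_i=w$. It then suffices to show that necessarily $w=\bar f$.

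The core is a dichotomy. Either $\bar f=w$, in which case $\bar f=a_{i_0}\bar{s_{i_0}}b_{i_0}$ for any $i_0$ realizing the maximum and we are done; or $\bar f\ll w$ strictly, which forces the coefficient of $w$ in $f$ to vanish and hence $N\ge 2$. In the latter case I pick two indices $i,j$ with $w_i=w_j=w$ and analyse how the subwords $\bar{s_i}$ and $\bar{s_j}$ sit inside the common word $w$. If they occupy disjoint positions, then replacing $\bar{s_i}$ by $s_i$ and $\bar{s_j}$ by $s_j$ shows that $a_i s_i b_i$ and $a_j s_j b_j$ coincide modulo terms with leading word $\ll w$; substituting one for the other merges the two top summands into one and strictly lowers $N$. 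If instead $\bar{s_i}$ and $\bar{s_j}$ overlap or one contains the other, then $\alpha_i a_i s_i b_i+\alpha_j a_j s_j b_j$ equals, modulo terms with leading word $\ll w$, a scalar multiple of an intersection or inclusion composition $\langle s_i,s_j\rangle_{w'}$ formed from the overlap $w'$; because $S$ is a Gr\"obner--Shirshov basis this composition is trivial, equal to $\sum_k \beta_k c_k s_k d_k$ with every $c_k\bar{s_k}d_k\ll w$, so substituting it back strictly lowers $(w,N)$. In either case minimality is contradicted, so the case $\bar f\ll w$ cannot occur and $w=\bar f$ follows.

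The main obstacle I anticipate is not the shape of this induction, which is the classical Composition--Diamond scheme, but the superalgebra bookkeeping that accompanies each rewriting. I must carry the signs $(-1)^{|x||y|}$ produced by the superbracket through every substitution, keep track of the leading coefficient $1$ or $2$ from Shirshov's special bracketing so that the normalisations $[u]_v$ and $\frac{1}{2}(a[v]b)$ are respected, and handle separately the exceptional super-Lyndon--Shirshov words $vv$ with $v$ odd, whose squaring complicates the overlap analysis. Ensuring that every intermediate expression remains inside the free Lie superalgebra $L\langle T\rangle$, rather than merely inside $A\langle T\rangle$, through the associative--Lie correspondence of \cite{B2}, is the delicate point on which the whole argument ultimately rests.
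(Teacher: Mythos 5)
The paper offers no proof of this lemma at all: it is quoted as Shirshov's lemma, with the citation to \cite{S1} (and the surrounding text defers all details to \cite{B2}), and is then used as a black box. So there is no internal argument to compare against; what you have done is reconstruct the standard Composition--Diamond proof contained in those references, and your outline is correct: write $f=\sum_i\alpha_i a_i s_i b_i$ in $A\langle T\rangle$ (this needs only that the Lie ideal generated by $S$ sits inside the associative ideal, not the composition equivalence you cite for it), choose a presentation minimizing the pair $(w,N)$, observe that $\bar f\neq w$ forces cancellation and hence $N\ge 2$, and then split into the disjoint case (two top terms merge modulo words $\ll w$) and the overlap/inclusion case (the difference is a composition, whose triviality as a Gr\"obner--Shirshov basis element rewrites it below $w$). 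Two remarks. First, your minimal-choice step requires $\ll$ to be well-founded; this holds when $\prec$ well-orders $T$ but fails for an arbitrary linear order on an infinite alphabet, and since new letters can enter at each rewriting step there is no a priori bound that rescues the induction --- you should state the well-ordering as a hypothesis (the paper and much of this literature gloss over it). Second, your closing worry about keeping intermediate expressions inside $L\langle T\rangle$ is a non-issue for your own strategy: once you invoke \cite[Lemma 2.7 and Theorem 2.8]{B2} to convert Lie triviality into associative triviality, the entire rewriting argument runs in $A\langle T\rangle$, and the conclusion $\bar f=a\bar s b$ is a statement about associative words. The genuinely super-specific difficulties you list (signs, the leading coefficient $2$ of the special bracketing, the odd squares $vv$) live precisely inside that cited equivalence, so quoting it rather than reproving it is legitimate and consistent with how the paper itself leans on \cite{B2}.
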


\section{Construction of the HNN-extension} \label{S:HNN}
In this section we construct the  HNN-extension of Lie superalgebras. Let $L$ be a Lie superalgebra over a field $\mathbb{K}$ with $\chr(\mathbb{K})\neq 2,3$, and $A$ be a graded subalgebra. Assume that $d\colon A\to L$ is a derivation defined on the graded subalgebra $A$. We can restrict to consider homogeneous derivations. We define the HNN-extension as the Lie superalgebra
\[
H \coloneqq \langle L, t: d(a)=[t,a], \ a\in A \rangle.
\]
Here $t$ is a new symbol of degree $|t|=|d|$ not belonging to $L$ which is added to the presentation of $L$. We also add the relation $[t,a]=d(a)$ (where $a\in A$). In the following, we give an equivalent presentation with respect to structural constants and check the triviality criteria for compositions of polynomials in the newly defined presentation.

Let us consider a linear basis $X$ of $L$ including a basis of $A$, which will be denoted by $B$, and a total ordering $ B < X\setminus B< t$.
We also consider the following superbrackets:
\[[x,y]=\sum_{v\in X} \alpha_{xy}^{v} v\]
for $x,y \in X$. Since $A$ is a subalgebra, then $\alpha_{ab}^v=0$ for $a,b \in B$ and $v \notin B$. In addition, we have
\[ d(a)= \sum_{v\in X} \beta_{a}^{v} v \]
for $a \in B$. Therefore, we have the following presentation for the HNN-extension of the Lie superalgebra $L$
\begin{align*}
    H \coloneqq \langle X,t \mid &[x,y]=\sum_{v\in X} \alpha_{xy}^{v} v, ~ [x_{\bar{1}},x_{\bar{1}}]=\sum_{v\in X} \alpha_{x_{\bar{1}}x_{\bar{1}}}^v v,\\
    & [t,a]=\sum_{v\in X} \beta_{a}^{v} v,\quad  x,x_{\bar{1}},y \in X, ~ x>y ,~ |x_{\bar{1}}|=\bar{1}, ~ a \in B \rangle.
\end{align*}

Now, we research the possible compositions of the presentation, and check if they are trivial. First, recall from Section~\ref{S:prel} that, due to the Jacobi identity, the structural constants satisfy
\begin{equation}\label{jac}
\sum_{v\in X}(\alpha_{yz}^v\alpha_{xv}^{u}-\alpha_{xy}^v\alpha_{vz}^{u}-(-1)^{|x||y|}\alpha_{xz}^v\alpha_{yv}^{u})=0,
\end{equation}
for $x,y,z,v,u\in X$, and where $\alpha_{xy}^v=-(-1)^{|x||y|}\alpha_{yx}^{v}$ due to the antisymmetry. In particular, note that $\alpha_{xx}^v=0$ if $|x|=\overline{0}$.
Also, the condition of derivation $d([x,y])=[d(x),y]+(-1)^{|x||y|}[x,d(y)]$ is written in terms of structural constants as
\begin{equation}\label{der}
\sum_{c\in B}\alpha_{ab}^c\beta_c^{u}=\sum_{v\in X}(\beta_a^v\alpha_{vb}^{u}+(-1)^{|d||a|}\beta_b^v\alpha_{av}^{u}),
\end{equation}
for $a,b,c\in B$ and $v,u\in X$.

We denote
\begin{align*}
f_{xy}&=[x,y]-\sum_{v\in X}\alpha_{xy}^vv,\\
f_{xx}&=[x,x]-\sum_{v\in X}\alpha_{xx}^vv,
\end{align*}
 and
\[g_a=[t,a]-\sum_{v\in X}\beta_a^vv.\]
Note that considering $f_{xx}$ is only meaningful when $|x|=\overline{1}$. We also consider $S$ as the following set of monic polynomials in $L\langle X\cup\{t\}\rangle$:
\[S =\{ f_{xy}, f_{xx}, g_{a} \mid x,y \in X , a \in B \}. \]

Now, let us check all the possible compositions of Lie polynomials $f_{xy},f_{xx},g_a$. Setting $x > y > z$ and $a> b$, it is clear that the leading terms in $A\langle X\cup\{t\} \rangle$ of these polynomials are $\overline{f_{xy}}=xy$, $\overline{f_{xx}}=2xx$, and $\overline{g_a}=ta$. So, the unique Lie compositions are the following: $\langle f_{xy},f_{yz}\rangle_{xyz}$, $\langle f_{xy},f_{yy}\rangle_{xyy}$, $\langle f_{xx},f_{xy}\rangle_{xxy}$, $\langle g_a,f_{ab}\rangle_{tab}$, $\langle g_a,f_{aa}\rangle_{taa}$. Note that all of them are compositions of intersection.
Just two of these five compositions are trivial, as it is seen next.

\begin{align*}
\langle f_{xy},f_{yz}\rangle_{xyz}&=[f_{xy},z]-[x,f_{yz}] \\
& =[[x,y],z]-\sum_{v\in X}\alpha_{xy}^v[v,z]-[x,[y,z]]+\sum_{v\in X}\alpha_{yz}^v[x,v]\\
& =-(-1)^{|x||y|}[y,[x,z]]-\sum_{v\in X}\alpha_{xy}^v[v,z]+\sum_{v\in X}\alpha_{yz}^v[x,v]\\
&=-(-1)^{|x||y|}[y,[x,z]-\sum_{v\in X}\alpha_{xz}^vv]-(-1)^{|x||y|}\sum_{v\in X}\alpha_{xz}^v([y,v]-\sum_{u\in X}\alpha_{yv}^{u}u)\\
& \ \ -\sum_{v\in X}\alpha_{xy}^v([v,z]-\sum_{u\in X}\alpha_{vz}^uu)+\sum_{v\in X}\alpha_{yz}^v([x,v]-\sum_{u\in X}\alpha_{xv}^uu)\\
& \ \ -\sum_{u\in X}\Big(-(-1)^{|x||y|}\sum_{v\in X}\alpha_{xz}^v\alpha_{yv}^{u}+\sum_{v\in X}\alpha_{xy}^v\alpha_{vz}^u+\sum_{v\in X}\alpha_{yz}^v\alpha_{xv}^u\Big)u\\
&=-(-1)^{|x||y|}[y,f_{xz}]-(-1)^{|x||y|}\sum_{v\in X}\alpha_{xz}^vf_{yv}-\sum_{v\in X}\alpha_{xy}^vf_{vz}+\sum_{v\in X}\alpha_{yz}^vf_{xv}.
\end{align*}

We see that $\overline{\langle f_{xy},f_{yz}\rangle_{xyz}}=xzy<xyz$, so the composition is trivial.

\begin{align*}
\langle g_a,f_{ab}\rangle_{tab}&=[g_a,b]-[t,f_{ab}] \\
& =[[t,a],b]-\sum_{v\in X}\beta_a^v[v,b]-[t,[a,b]]+\sum_{v\in B}\alpha_{ab}^v[t,v] \\
& =-(-1)^{|t||a|}[a,[t,b]]-\sum_{v\in X}\beta_a^v[v,b]+\sum_{v\in B}\alpha_{ab}^v[t,v] \\
& =-(-1)^{|t||a|}[a,[t,b]-\sum_{v\in X}\beta_b^vv] -(-1)^{|t||a|} \sum_{v\in X}\beta_b^v([a,v]-\sum_{u\in X}\alpha_{av}^{u}u)  \\
& \ \ -\sum_{v\in X}\beta_a^v([v,b]-\sum_{u\in X}\alpha_{vb}^{u}u)+\sum_{v\in B}\alpha_{ab}^v([t,v]-\sum_{u\in X}\beta_v^{u}u) \\
&   \ \  +\sum_{u\in X}\Big(-(-1)^{|t||a|}\sum_{v\in X}\beta_b^v\alpha_{av}^{u}-\sum_{v\in X}\beta_a^v\alpha_{vb}^{u}+\sum_{v\in B}\alpha_{ab}^v\beta_v^{u}\Big)u \\
& = -(-1)^{|t||a|}[a,g_b] -(-1)^{|t||a|} \sum_{v\in X}\beta_b^vf_{av}-\sum_{v\in X}\beta_a^vf_{vb}+\sum_{v\in B}\alpha_{ab}^vg_v.
\end{align*}
Again, we have that  $\overline{\langle g_a,f_{ab}\rangle_{tab}}=tba<tab$, so this composition is also trivial.

However, it holds that, if $|x|=\overline{1}$,
\begin{align*}
\langle f_{xx},f_{xy}\rangle_{xxy}&=\frac{1}{2}[f_{xx},y]-[x,f_{xy}] \\
& =\frac{1}{2}[[x,x],y]-\frac{1}{2}\sum_{v\in X}\alpha_{xx}^v[v,y]-[x,[x,y]]+\sum_{v\in V}\alpha_{xy}^v[x,v] \\
& =-\frac{1}{2}[[x,x],y]+[x,[x,y]]-\frac{1}{2}\sum_{v\in X}\alpha_{xx}^v[v,y]+\sum_{v\in V}\alpha_{xy}^v[x,v] \\
& = [-\frac{1}{2}[x,x]+\frac{1}{2}\sum_{v\in X}\alpha_{xx}^vv,y]-\frac{1}{2}\sum_{v\in X}\alpha_{xx}^v([v,y]-\sum_{u\in X}\alpha_{vy}^{u}u)\\
&\quad +[x,[x,y]-\sum_{v\in X}\alpha_{xy}^vv]+\sum_{v\in X}\alpha_{xy}^v([x,v]-\sum_{u\in X}\alpha_{xv}^uu)\\
&\quad -\frac{1}{2}\sum_{v\in X}\alpha_{xx}^v([v,y]-\sum_{u\in X}\alpha_{vy}^{u}u)+\sum_{v\in V}\alpha_{xy}^v([x,v] -\sum_{u\in X}\alpha_{xv}^{u}u)\\
&\quad +\sum_{u\in X}\Big(-\sum_{v\in X}\alpha_{xx}^v\alpha_{vy}^{u} +\sum_{v\in X}\alpha_{xy}^v\alpha_{xv}^u  +\sum_{v\in V}\alpha_{xy}^v\alpha_{xv}^{u}\Big)u \\
& =-\frac{1}{2}[f_{xx},y]+[x,f_{xy}]-\sum_{v\in X}\alpha_{xx}^vf_{vy}+2\sum_{v\in X}\alpha_{xy}^vf_{xv};
\end{align*}
the leading monomial is $\overline{\langle f_{xx},f_{xy}\rangle_{xxy}}=xxy$, so the composition is not trivial. Similarly, if $|y|=\overline{1}$, we have
\begin{equation*}
\langle f_{xy},f_{yy}\rangle_{xyy}=[x,\frac{1}{2}f_{yy}]-(-1)^{|x||y|}[y,f_{xy}]-2\sum_{v\in X}\alpha_{xy}^vf_{vy}+\sum_{v\in X}\alpha_{yy}^vf_{xv}.
\end{equation*}
The leading monomial is $\overline{\langle f_{xy},f_{yy}\rangle_{xyy}}=xyy$, so the composition is, again, not trivial.

Finally, we research the last composition $\langle g_a,f_{aa}\rangle_{taa}$, for $|a|=1$.

\begin{align*}
\langle g_a,f_{aa}\rangle_{taa}&=[g_a,a]-\frac{1}{2}[t,f_{aa}] \\
&=[[t,a],a]-\sum_{v\in X}\beta_a^v[v,a]-\frac{1}{2}[t,[a,a]]+\frac{1}{2}\sum_{v\in B}\alpha_{aa}^v[t,v] \\
&=\frac{1}{2}[t,[a,a]]-(-1)^{|t||a|}[a,[t,a]]-\sum_{v\in X}\beta_a^v[v,a]+\frac{1}{2}\sum_{v\in B}\alpha_{aa}^v[t,v] \\
&=\frac{1}{2}[t,[a,a]-\sum_{v\in X}\alpha_{aa}^vv]+\frac{1}{2}\sum_{v\in B}\alpha_{aa}^v([t,v]-\sum_{u\in X}\beta_{v}^{u}u) \\
&\quad -(-1)^{|t||a|}[a,[t,a]-\sum_{v\in X}\beta_{a}^vv]-(-1)^{|t||a|}\sum_{v\in X}\beta_{a}^v([a,v]-\sum_{u\in X}\alpha_{av}^{u}u)  \\
&\quad -\sum_{v\in X}\beta_a^v([v,a]-\sum_{u\in X}\alpha_{va}^uu)+\frac{1}{2}\sum_{v\in B}\alpha_{aa}^v([t,v]-\beta_{v}^uu) \\
&\quad +\sum_{u\in X}\Big(\sum_{v\in B}\alpha_{aa}^v\beta_{v}^{u}-(-1)^{|t||a|}\sum_{v\in X}\beta_{a}^v\alpha_{av}^{u}-\sum_{v\in X}\beta_a^v\alpha_{va}^u\Big)u \\
&=\frac{1}{2}[t,f_{aa}]-(-1)^{|t||a|}[a,g_a]-(-1)^{|t||a|}\sum_{v\in X}\beta_a^vf_{av}-\sum_{v\in X}\beta_a^vf_{va}+\sum_{v\in X}\alpha_{aa}^vg_{v}.
\end{align*}

As the leading monomial is $\overline{\langle g_a,f_{aa}\rangle_{taa}}=taa$, the composition is not trivial.

The calculation of the possible compositions of polynomials in a presentation of a HNN-extension of Lie superalgebras helps us to extract the general idea about the degree of polynomials of its Gr\"obner-Shirshov basis. In fact, it is an experimental technique based on Shirshov's algorithm \cite{S1} which shows that by extending $S$ to a larger set, the new polynomials are of degree at least two. This proves that the Lie superalgebra $L$ embeds in $H$. So, we have the following theorem as a direct result of our computations.
\begin{theorem}
Every Lie superalgebra embeds into its HNN-extensions.
\end{theorem}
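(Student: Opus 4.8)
The strategy is to apply the Composition--Diamond lemma in the form of Lemma~\ref{L:GS} to the presentation $H=L\langle X\cup\{t\}\rangle/\Id(S)$. Because $X$ is a linear basis of $L$ and the canonical homomorphism $L\to H$ carries each basis vector to the corresponding generator, proving the embedding reduces to showing that the generators in $X$ remain linearly independent in $H$. By Lemma~\ref{L:GS}, once $S$ is completed to a Gr\"obner--Shirshov basis $\tilde S$ of $\Id(S)$, the irreducible monomials -- those whose associative leading word contains no $\bar s$, $s\in\tilde S$, as a subword -- form a linear basis of $H$. A word of length one can be reducible only by a relation whose leading word is itself of length one; hence the whole theorem collapses to the single assertion that no element of $\tilde S$ has a leading word of length one.

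First I would run Shirshov's completion algorithm on $S$, repeatedly adjoining the nonzero reduced remainders of the nontrivial compositions until closure is reached. The computations carried out above furnish the base case of an induction on the length of leading words: the generating relations $f_{xy}$, $f_{xx}$, $g_a$ have leading words $xy$, $2xx$, $ta$ of length two; the compositions $\langle f_{xy},f_{yz}\rangle_{xyz}$ and $\langle g_a,f_{ab}\rangle_{tab}$ are trivial; and the three nontrivial compositions $\langle f_{xx},f_{xy}\rangle_{xxy}$, $\langle f_{xy},f_{yy}\rangle_{xyy}$, $\langle g_a,f_{aa}\rangle_{taa}$ have leading words $xxy$, $xyy$, $taa$ of length three. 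Thus after the first round of completion every relation still has a leading word of length at least two.

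The heart of the proof is the inductive claim that this persists throughout the algorithm: every adjoined polynomial has a leading word of length at least two. For the inductive step one forms, say, an intersection composition $\langle f,g\rangle_w$ from relations whose leading words $\bar f,\bar g$ already have length at least two, with $w=\bar f a=b\bar g$ and $l(\bar f)>l(b)$; the overlap then has positive proper length, so $l(w)\ge\max\{l(\bar f),l(\bar g)\}+1\ge 3$, and $\overline{\langle f,g\rangle_w}\ll w$ bounds the new leading word from above. The main obstacle is that $\ll$ forbids only length exceeding $l(w)$ and does not by itself exclude length one, so one must check that the super-Jacobi cancellations inside $\langle f,g\rangle_w$, followed by reduction modulo the current set, never collapse the remainder to a single generator. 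Since a length-one Lie polynomial is merely a scalar combination of generators, this is equivalent to $\Id(S)$ meeting the span of $X\cup\{t\}$ only in zero -- that is, to the embedding itself -- so the step carries the full weight of the theorem and cannot be bypassed. I expect it to follow by maintaining the stronger invariant that each relation of $\tilde S$ rewrites a super-Lyndon--Shirshov leading word of length at least two into a $\mathbb K$-linear combination of strictly shorter words, keeping every remainder inside $L\langle X\cup\{t\}\rangle$ while strictly decreasing the length-lexicographical order; this is precisely the point the source justifies experimentally through Shirshov's algorithm, and the one a fully rigorous treatment must pin down by tracking the leading words through each successive round of completion.
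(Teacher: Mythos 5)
Your proposal follows essentially the same route as the paper: present $H$ by the relation set $S=\{f_{xy},f_{xx},g_a\}$, compute the five possible compositions (two trivial, three not), run Shirshov's completion algorithm, and conclude via the Composition--Diamond lemma (Lemma~\ref{L:GS}) that, since every polynomial in the completed basis has leading word of length at least two, no nonzero linear combination of the generators can lie in $\Id(S)$, whence $L$ embeds into $H$. The inductive step you single out as the crux---that completion never produces a relation with leading word of length one---is precisely the point the paper itself settles only by appealing to Shirshov's algorithm as an ``experimental technique,'' so your reconstruction matches the paper's argument, including its level of rigor.
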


\section{Application} \label{S:app}

The work \cite{M2} of Mikhalev provided an embeddability theorem for Lie superalgebras analogue to a known theorem in group theory (see \cite{H1}).
Mikhalev proved that every Lie superalgebra of at most countable dimension can be embedded in a Lie superalgebra with two generators. In the following, we show that Mikhalev's approach is compatible with the concept of HNN-extension of Lie superalgebras.

For the next lemma, we use semidirect product of Lie superalgebras (see \cite{M1}).

\begin{lemma}
Let $L$ be a Lie superalgebra and $A$ a graded subalgebra of $L$ which is free on a subset $Z$. Then every map $d \colon Z \to L$ can be extended to a derivation from $A$ to $L$.
\end{lemma}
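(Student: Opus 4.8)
The plan is to deduce the existence of the extending derivation from the universal property of the free Lie superalgebra $A \cong L\langle Z\rangle$, packaging the Leibniz rule into a single homomorphism by means of a semidirect product. Since the space of derivations $\Der(A,L)$ is $\mathbb{Z}_2$-graded, I would first decompose the given map: for each $z\in Z$ split $d(z)$ into its component of parity $|z|$ and its component of parity $|z|+\bar{1}$, obtaining a degree-$\bar{0}$ assignment and a degree-$\bar{1}$ assignment whose sum is $d$. It then suffices to extend a homogeneous $d$ of a fixed degree $|d|$, the general case following by adding the two homogeneous extensions.

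First I would make $L$ into a graded $A$-module $M$ through the restricted adjoint action $a\cdot x = [a,x]$; this is well defined because $A$ is a subalgebra and satisfies the module axioms by the super Jacobi identity. When $|d|=\bar{1}$ I would take $M$ to be $L$ with reversed parity, so that the assignment $z\mapsto(z,d(z))$ becomes degree-preserving. Next, I would form the semidirect product $E = A\ltimes M$, in which $M$ sits as an abelian graded ideal and $A$ acts through the module structure above; this $E$ is again a Lie superalgebra and carries a projection homomorphism $\pi\colon E\to A$, $(a,m)\mapsto a$, with kernel $M$.

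The engine of the argument is freeness. The degree-preserving set map $Z\to E$, $z\mapsto(z,d(z))$, extends uniquely to a homomorphism $\Phi\colon A\to E$. Composing with $\pi$, the homomorphism $\pi\circ\Phi\colon A\to A$ restricts to the identity on $Z$, so by uniqueness $\pi\circ\Phi=\Id_A$; hence $\Phi$ is a section of $\pi$ and necessarily has the form $\Phi(a)=(a,D(a))$ for a $\mathbb{K}$-linear map $D\colon A\to L$ with $D(z)=d(z)$. Unwinding the identity $\Phi([a,b])=[\Phi(a),\Phi(b)]$ in $E$ and reading off the $M$-component gives precisely $D([a,b])=[D(a),b]+(-1)^{|d||a|}[a,D(b)]$, so $D$ is the desired derivation extending $d$.

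The main obstacle is the consistent bookkeeping of the $\mathbb{Z}_2$-grading and the attendant Koszul signs: one must choose the grading on $M$ --- the parity shift in the odd case --- so that simultaneously $z\mapsto(z,d(z))$ is homogeneous, and the semidirect-product bracket on $E$ obeys graded antisymmetry and the super Jacobi identity. Once the module structure and the signs defining $E$ are fixed coherently, the passage from ``$\Phi$ is a homomorphism'' to ``$D$ is a derivation'' is automatic, and no relations of the free algebra $A$ need to be checked by hand, which is the advantage of this route over defining $D$ directly by recursion on bracket length.
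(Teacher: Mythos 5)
Your proposal is correct and follows essentially the same route as the paper: form the semidirect product of $A$ with an abelianized copy of $L$ (on which $A$ acts adjointly), use freeness of $A$ on $Z$ to extend $z\mapsto(z,d(z))$ to a homomorphism, and read the derivation identity off the second component. Your version is in fact slightly more careful than the paper's, since you reduce to homogeneous $d$ and introduce the parity shift needed to make $z\mapsto(z,d(z))$ degree-preserving when $|d|=\bar{1}$, and you justify $u(a)=a$ via uniqueness of the extension rather than by inspection.
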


\begin{proof}
Let $L^{\prime}$ be an abelian Lie superalgebra with underlying linear space $L$. Since $L^{\prime}$ is abelian, then $L_{\bar{0}}^{\prime}=0$, i.e. $[x,y]=0$ for all $x,y \in L^{\prime}$.
Let us define the morphism $h \colon A \to \Der(L^{\prime})$ by $h(a)(l)=[l,a]$.
Let us consider $A \ltimes L^{\prime}$ ($A$ acts  on $L^{\prime}$ via the morphism $h$) and a Lie superalgebra homomorphism $f \colon A \to A \ltimes L^{\prime}$ defined by $f(z)=(z,d(z))$ for every $z \in Z$.
For an arbitrary
$a \in A$, let $f(a) = (u(a),D(a))$.
Then for  $a_1, a_2\in A$ we have
\begin{align*}
(u([a_1, a_2]), D([a_1, a_2]))&= f([a_1, a_2])\\
                              &=[f(a_1), f(a_2)]\\
                              &=[(u(a_1), D(a_1)), (u(a_2), D(a_2))]\\
                              &=([u(a_1), u(a_2)], [D(a_1), D(a_2)]-(-1)^{{|D(a_1)|}  |u(a_2)|}[u(a_2), D(a_1)]\\
                              & \qquad +[u(a_1), D(a_2)]).
\end{align*}

Since $L^{\prime}$ is abelian, $[D(a_1), D(a_2)]=0$. Also, the above computation shows that $u(a)=a$ for all $a$. Hence if we use $[a_1,a_2]=-(-1)^{|a_1||a_2|}[a_2,a_1]$, we have
\[
D([a_1, a_2])= [D(a_1), a_2]+(-1)^{{|D(a_2)| |a_1|}}[a_1, D(a_2)],
\]
so $D$ is a derivation and extends $d$.

\end{proof}
\begin{lemma}[\cite{M2}]\label{essentiallemma}  The elements $[y, \dots [y, [y,x]],\dots ]$ form a free Lie superalgebra $L(x,y)$.
\end{lemma}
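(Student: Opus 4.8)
My plan is to read this statement as the super-analogue of the Lazard elimination theorem. Writing $u_n\coloneqq(\ad y)^n(x)$ for $n\geq0$ (so $u_0=x$), I would show that the graded subalgebra of the free Lie superalgebra $F=L\langle x,y\rangle$ generated by $\{u_n\}_{n\geq0}$ (which is in fact the graded ideal of $F$ generated by $x$) is \emph{free} on these generators. Concretely, let $G=L\langle z_0,z_1,\dots\rangle$ be the free Lie superalgebra on countably many symbols with $|z_n|=|x|+n|y|$, and let $\Phi\colon G\to F$ be the homomorphism with $\Phi(z_n)=u_n$. Freeness is exactly the assertion that $\Phi$ is injective, i.e. restricts to an isomorphism onto its image $\langle u_n\rangle$.

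The construction I would use is a split extension driven by a derivation. First, applying the preceding lemma with $Z=\{z_n\}$ and the assignment $z_n\mapsto z_{n+1}$, I obtain the unique derivation $D\in\Der_{|y|}(G)$ with $D(z_n)=z_{n+1}$. Let $W=\langle D\rangle\subseteq\Der(G)$ be the graded Lie subalgebra it generates and form the semidirect product $H=W\ltimes G$. Setting $t\coloneqq D$, the elements $t$ and $z_0$ generate $H$, since $(\ad t)^n z_0=D^n(z_0)=z_n$ recovers all the $z_n$ (hence all of $G$) and $W=\langle t\rangle$. By the universal property of $F$ there is a homomorphism $\pi\colon F\to H$ with $\pi(x)=z_0$ and $\pi(y)=t$, which is surjective; and by the universal property of the semidirect product there is a homomorphism $\rho\colon H\to F$ with $\rho(z_n)=u_n$ and $\rho(t)=y$. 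On the generators $x,y$ of $F$ one has $\rho\pi(x)=\rho(z_0)=x$ and $\rho\pi(y)=\rho(t)=y$, while on the generators $z_0,t$ of $H$ one has $\pi\rho(z_0)=z_0$ and $\pi\rho(t)=t$, so both composites are the identity and $\pi$ is an isomorphism $F\cong H$. Under it the free subalgebra $G=\langle z_n\rangle\leq H$ maps isomorphically onto $\langle u_n\rangle\leq F$ via $z_n\mapsto u_n$; since $G$ is free on $\{z_n\}$, the $u_n$ freely generate their span, which is the claim.

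The step requiring the most care — and the main obstacle — is the well-definedness of $\rho$, that is, checking that $t\mapsto y$, $z_n\mapsto u_n$ respects all defining relations of $H=W\ltimes G$. The intertwining relations $[t,z_n]=z_{n+1}$ go over to $[y,u_n]=u_{n+1}$, which hold by definition of $u_n$, and the identity $\rho(D(g))=[y,\rho(g))$ then propagates from the generators $z_n$ to all of $G$ because $g\mapsto\rho(D(g))-[y,\rho(g)]$ is a $\rho$-twisted derivation vanishing on generators. The genuine subtlety lies in the internal relations of $W$ when $|y|=\bar{1}$: there $[t,t]=2D^2\neq0$ acts on $G$ by $2D^2(z_n)=2z_{n+2}$, and one must match the super-Jacobi relation $[t,[t,t]]=0$ in $W$ (valid since $[D,D^2]=0$) with $[y,[y,y]]=0$ in $F$, the latter being guaranteed precisely by the odd-element axiom $[x_{\bar{1}},[x_{\bar{1}},x_{\bar{1}}]]=0$ in the definition of a Lie superalgebra. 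Once this parity bookkeeping is settled, $\pi$ and $\rho$ are mutually inverse and the proof closes. As an independent confirmation one could instead argue inside the Gr\"obner–Shirshov framework of Section~\ref{S:GS}: ordering $x\prec y$, the associative leading word of $u_n$ is $\overline{u_n}=y^nx$, and since each such word terminates in the unique occurrence of the letter $x$, the family $\{y^nx\}_{n\geq0}$ is a uniquely decipherable code; consequently distinct super-Lyndon–Shirshov monomials assembled from the $u_n$ have distinct leading words in $F$ and remain linearly independent, excluding any nontrivial relation among the $u_n$.
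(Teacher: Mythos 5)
Your argument is sound, but be aware that there is no proof in the paper to compare it with: the paper states this lemma with a citation to Mikhalev \cite{M2} and imports it as a known result, so what you have written is a self-contained substitute rather than a variant of the paper's argument. Your route is the super-analogue of Lazard elimination: realize $F=L\langle x,y\rangle$ as a semidirect product $W\ltimes G$, where $G$ is free on countably many generators $z_n$ with $|z_n|=|x|+n|y|$, $D$ is the shift derivation $D(z_n)=z_{n+1}$, and $W=\langle D\rangle$, and then produce mutually inverse homomorphisms $\pi$ and $\rho$; this fits naturally with the paper's own toolkit, since the lemma immediately preceding this one is exactly the derivation-extension statement you need, and its proof also goes through a semidirect product. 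Two points should be made explicit in a final write-up. First, when $|y|=\bar{1}$, checking that $\rho$ respects the structure of $W$ requires more than matching $[t,[t,t]]=0$ with $[y,[y,y]]=0$: you must verify that $\rho_W\colon D\mapsto y$, $D^2\mapsto\tfrac{1}{2}[y,y]$ preserves all brackets of $W$ (the relation $[D^2,D^2]=0$ corresponds to $[[y,y],[y,y]]=0$, which uses the even-element axiom, not the odd one), and separately that the \emph{action} of $[t,t]=2D^2$ on $G$ is compatible with $\rho$, i.e.\ $\rho(D^2(g))=[\tfrac{1}{2}[y,y],\rho(g)]$; this last identity does follow from your $D$-compatibility together with the super-Jacobi identity in the form $[y,[y,w]]=\tfrac{1}{2}[[y,y],w]$, but it is a distinct verification. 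Second, your closing ``independent confirmation'' via leading words is only a sketch: unique decipherability of the code $\{y^nx\}_{n\geq 0}$ does not by itself yield that distinct super-Lyndon--Shirshov monomials in the $u_n$ have distinct leading words in $F$, so the semidirect-product argument is the one that should carry the proof. The unused parenthetical claim that $\langle u_n\rangle$ equals the graded ideal of $F$ generated by $x$ is true but needs its own short argument, and since nothing depends on it, it is best dropped.
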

\begin{theorem}[\cite{M2}]
Let $\chr (\mathbb{K})\neq 2,3$. Every Lie superalgebra $L$ over $\mathbb{K}$ with at most countable dimension can be embedded in a Lie superalgebra with two generators.
\end{theorem}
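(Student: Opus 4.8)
The plan is to realize the countable basis of $L$ inside the free family of a two-generator free Lie superalgebra and then glue it back onto $L$ by an HNN-extension, so that the embedding theorem of Section~\ref{S:HNN} applies. First I would pass to the free product (coproduct of Lie superalgebras) $\Gamma = L \ast L\langle x,y\rangle$, where $L\langle x,y\rangle$ is free on two homogeneous generators $x,y$. Then $L$ embeds into $\Gamma$, the generators $x,y$ are free over $L$, and, since $L\langle x,y\rangle$ sits inside $\Gamma$, Lemma~\ref{essentiallemma} shows that the elements $c_n=[y,[y,\dots,[y,x]\dots]]$ (with $n$ copies of $y$, and $c_0=x$) span a free subalgebra $A=L\langle c_0,c_1,\dots\rangle$ of $\Gamma$.

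Next I would fix a homogeneous basis $\{a_i\}$ of $L$ and use the derivation-extension lemma: since $A$ is free on $\{c_n\}$, any assignment $c_n\mapsto a_{\sigma(n)}\in L\subseteq\Gamma$ extends to a homogeneous derivation $d\colon A\to\Gamma$. To match degrees I would take $\abs{y}=\bar1$, so that $\{c_n\}$ contains infinitely many even and infinitely many odd elements; then $\sigma$ can be chosen to carry $\{c_n\}$ onto a homogeneous basis of $L$ (sending any surplus generators to $0$). Forming the HNN-extension $H=\langle \Gamma,t : [t,a]=d(a),\ a\in A\rangle$ with $\abs{t}=\abs{d}$, the embedding theorem of Section~\ref{S:HNN} yields $L\hookrightarrow\Gamma\hookrightarrow H$. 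By construction every basis element is recovered as $a_{\sigma(n)}=d(c_n)=[t,c_n]=[t,[y,\dots,[y,x]\dots]]$, so $L$, and hence $\Gamma$ and $H$, lie in the subalgebra generated by $\{x,y,t\}$; thus $L$ embeds into a finitely generated Lie superalgebra.

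The main obstacle is to bring the number of generators down to exactly two. The construction as stated spends two generators on the free family, $x$ and $y$, and a third on the stable letter $t$. To absorb one of them I would try to choose the seed of the free family among the two surviving generators and re-run the encoding so that the adjoint action of the stable letter both produces the free family and realizes the derivation $d$; concretely this asks for a free, $d$-invariant subalgebra whose image under $d$ generates $L$, with $\ad t$ playing the role of $d$. The delicate point is that freeness of the family (needed to invoke the derivation-extension lemma), invariance under $d$, and the requirement that $d(A)$ generate all of $L$ pull in different directions; reconciling them—equivalently, embedding the finitely generated $H$ into a two-generated superalgebra by one further, carefully tailored HNN-extension—is where the real work lies, and one must again verify, through the composition and triviality analysis of Section~\ref{S:HNN}, that the defining set remains a Gr\"obner--Shirshov basis so that the embedding persists.
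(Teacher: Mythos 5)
Your construction follows the paper's route almost exactly---the free product $\Gamma = L \ast L\langle x,y\rangle$, the free family $c_n=[y,\dots,[y,x]\dots]$ provided by Lemma~\ref{essentiallemma}, the derivation-extension lemma, and a single HNN-extension---but it stops one idea short, and that idea is precisely what makes the theorem true with \emph{two} generators rather than three. The gap lies in your choice of $d$: you send every free generator $c_n$ (including the seed $c_0=x$) into $L$, so the stable letter $t$ only recovers elements of $L$, and $y$ survives as an independent generator; you then correctly observe that you have only embedded $L$ into the three-generated subalgebra $\langle x,y,t\rangle$, and you defer the reduction to two generators as ``the real work.'' The paper closes this gap with a one-line modification of the assignment: set $d(c_0)=d(x)=y$, and let the remaining free generators $c_n$, $n\geq 1$, go to a generating set of $L$ (in the paper's notation, $d(a)=b$ and $d([b^n,a])=c_n$ with the $c_n$ generating $L$). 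Then, in the HNN-extension $H=\langle \Gamma, t : [t,u]=d(u),\ u\in A\rangle$, one has $y=[t,x]$, hence each $c_n$ lies in the subalgebra generated by $x$ and $t$, hence so does every generator $[t,c_n]$ of $L$; since $\Gamma$ is generated by $L\cup\{x,y\}$ and $H$ by $\Gamma\cup\{t\}$, all of $H$ is generated by $x$ and $t$, while $L\hookrightarrow H$ by the embedding theorem of Section~\ref{S:HNN}.

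Note also that the obstructions you anticipate for this step are not actually there. You worry about finding a free, ``$d$-invariant'' subalgebra and about performing a second, carefully tailored HNN-extension, but the framework of Section~\ref{S:HNN} only requires a derivation $d\colon A\to \Gamma$ defined on a graded subalgebra $A$ with values in the \emph{ambient} algebra---no invariance of $A$ under $d$ is needed---and the derivation-extension lemma applies to an arbitrary map on the free basis, in particular to one whose image meets both $L\langle x,y\rangle$ and $L$. The degree bookkeeping is also compatible: taking $|x|=\bar{0}$, $|y|=\bar{1}$ and $|d|=|t|=\bar{1}$ gives $|c_n|=\overline{n}$ and forces $|d(c_n)|=\overline{n+1}$, so the generators $c_n$ with $n\geq 1$ can hit homogeneous generators of $L$ of both parities (there are infinitely many of each index parity, and surplus generators may go to $0$). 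Thus freeness, the generation requirement, and homogeneity coexist without tension, and a single HNN-extension suffices.
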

\begin{proof}
Let us consider the free product $L_1=L \ast L(a,b)$, where $L(a,b)$ is the free Lie superalgebra generated by $a,b$. The elements \[[b,\dots,[b,[b,a]],\dots ]\] are a free basis $Z$ of some graded subalgebra $A$ of $L_1$.
We consider the map $d \colon Z \to L_1$ with $d(a)=b$ and $d([b^{n},a])=c_{n}$ (by $[b^{n},a]$
 we mean $[b,\dots , [b,a] \dots ]$ where $b$ appears $n$ times) where the $c_{n}$ conform a generating set of $L$. Then, this map $d$ extends to a derivation.
Now the HNN-extension is defined  as follows
\[ \langle L_1,t \mid [t,[b^{n},a]]=c_{n} , [t,a]=b \rangle .\]
It is a Lie superalgebra generated by $a$ and $t$ and contains $L$.
\end{proof}

\section*{Acknowledgments}

The authors were supported by  Agencia Estatal de Investigaci\'on (Spain), grant MTM2016-79661-P (European FEDER support included, UE).
 The first author was also partially supported by  visiting
program (USC).
P. P\'aez-Guill\'an was also supported by FPU scholarship, Ministerio de Educaci\'on, Cultura
y Deporte (Spain).

\end{document}